\newtheorem{theorem}{Theorem}
\newtheorem{lemma}[theorem]{Lemma}
\newtheorem{conjecture}[theorem]{Conjecture}
\newtheorem*{problem}{Problem}
\theoremstyle{remark}
\newtheorem*{remark}{Remark} 
\renewcommand{\leq}{\leqslant}
\renewcommand{\geq}{\geqslant}
\newcommand{\ceil}[1]{\lceil#1\rceil}
\newcommand{\floor}[1]{\lfloor#1\rfloor}
\newcommand{\qbin}[2]{\genfrac{[}{]}{0pt}{}{#1}{#2}}
\newcommand{\abs}[1]{\vert#1\vert}
\newcommand{\dash}{\,---\,}
\newcommand{\ba}{\boldsymbol{a}}
\newcommand{\bb}{\boldsymbol{b}}
\begin{document}

\title{$q$-rious unimodality}

\author{S.~Ole Warnaar}

\address{School of Mathematics and Physics,
The University of Queensland, Brisbane, Australia}

\author{Wadim Zudilin}
\address{Institute for Mathematics, Astrophysics and Particle Physics,
Radboud University, Nijmegen, The Netherlands}

\dedicatory{To the memory of Dick Askey, mentor and friend}

\subjclass[2020]{Primary 11B65; Secondary 05A10, 11B83, 11C08, 33D15}

\thanks{Work supported by the Australian Research Council (SOW) and the Max-Planck-Institute for Mathematics (WZ)}

\begin{abstract}
We generalise our still-wide-open \emph{$q$-rious positivity conjecture} from 2011 to a \emph{$q$-rious unimodality conjecture}.
\end{abstract}

\maketitle

Anyone who has ever had a mathematics conversation with the late great Dick Askey, or anyone who has ever attended one of his talks (or a talk where he was in the audience) would know that he loved to remind us of simple unsolved problems that he thought we should be thinking more about.
`Simple' of course refers to `simple to state' but almost always hard to solve.
Although we would never dream of comparing ourselves to Dick Askey, it nonetheless seems appropriate in a tribute paper to follow in his footsteps and remind combinatorialists, special functions aficionados and number theorists of the following simple open problem, \cite[Conjecture 1]{WZ11}.

We say that a nonzero polynomial $c_0+c_1q+\dotsb+c_kq^k\in\mathbb{Z}[q]$ is \emph{positive} if all $c_i\geq 0$.  

\begin{conjecture}[$q$-rious positivity]\label{qrious}
Let $\ba=(a_1,\dotsc,a_r)$ and $\bb=(b_1,\dotsc,b_s)$ be tuples of positive integers satisfying
\begin{equation}\label{Landau}
\sum_{i=1}^r\floor{a_ix}-\sum_{j=1}^s\floor{b_jx}\geq 0
\quad\text{for all}\; x\geq 0.
\end{equation}
Then the polynomial
\begin{equation}\label{Dab}
D(\ba,\bb;q)=\frac{[a_1]!\dotsm[a_r]!}{[b_1]!\dotsm[b_s]!}
\end{equation}
is positive.
\end{conjecture}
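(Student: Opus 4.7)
My plan is to first verify polynomiality, then to pursue positivity via either a combinatorial model or an algebraic decomposition into known-positive building blocks. Using the factorisation $1-q^n=\prod_{d\mid n}\Phi_d(q)$ one has $[n]!=\prod_{d\geq 1}\Phi_d(q)^{\floor{n/d}}$, which yields
\begin{equation*}
D(\ba,\bb;q)=\prod_{d\geq 1}\Phi_d(q)^{e_d(\ba,\bb)},\qquad e_d(\ba,\bb)=\sum_{i=1}^r\floor{a_i/d}-\sum_{j=1}^s\floor{b_j/d}.
\end{equation*}
Applying \eqref{Landau} at $x=1/d$ shows $e_d(\ba,\bb)\geq 0$ for every $d\geq 1$, so $D(\ba,\bb;q)\in\mathbb{Z}[q]$ (a $q$-analog of Landau's classical theorem). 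However, individual cyclotomic factors such as $\Phi_6(q)=q^2-q+1$ already carry negative coefficients, and in general $\Phi_n(q)$ can have coefficients of arbitrarily large magnitude in either sign, so this product representation on its own cannot settle the conjecture.

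The second step is to seek a set $S(\ba,\bb)$ equipped with a statistic $\mathrm{stat}\colon S(\ba,\bb)\to\mathbb{Z}_{\geq 0}$ such that $D(\ba,\bb;q)=\sum_{\sigma\in S(\ba,\bb)}q^{\mathrm{stat}(\sigma)}$. When $(\ba,\bb)=\bigl((n),(k,n-k)\bigr)$ this recovers the Gaussian binomial coefficient with $\mathrm{stat}$ counting inversions of a binary word, and an analogous model is standard for Gaussian multinomials. The idea is to construct a signed combinatorial set together with a $\mathrm{stat}$-preserving, sign-reversing involution whose fixed-point set is $S(\ba,\bb)$, designed so that the underlying cancellation is driven by the Landau hypothesis \eqref{Landau}.

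In parallel, I would attempt to expand $D(\ba,\bb;q)$ as a $\mathbb{Z}_{\geq 0}[q]$-linear combination of Gaussian multinomials \dash already known to be positive and indeed unimodal \dash by iterated application of the $q$-Chu--Vandermonde summation
\begin{equation*}
\qbin{m+n}{k}=\sum_{j}q^{(k-j)(m-j)}\qbin{m}{j}\qbin{n}{k-j},
\end{equation*}
peeling off one factorial ratio at a time while tracking a Landau-type certificate of positivity at every intermediate stage of the reduction.

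The principal obstacle is structural: removing any single entry $a_i$ or $b_j$ typically destroys \eqref{Landau}, so no obvious induction on $r+s$ is available; the cyclotomic product description does not respect positivity; and there is at present no combinatorial or Schur-positivity framework known to handle all tuples $(\ba,\bb)$ admissible by Landau's condition. A successful proof will therefore likely require either a genuinely new bijective construction or the identification of a hidden algebraic structure \dash such as a naturally graded module whose character is $D(\ba,\bb;q)$ \dash underlying these polynomials.
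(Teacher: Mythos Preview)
The statement you are attempting to prove is presented in the paper as an \emph{open conjecture}, not a theorem; the paper contains no proof of it. Your proposal is therefore not being compared against any argument in the paper, because none exists. What you have written is an honest research outline rather than a proof: the cyclotomic factorisation
\[
D(\ba,\bb;q)=\prod_{d\geq 1}\Phi_d(q)^{e_d(\ba,\bb)},\qquad e_d(\ba,\bb)=\sum_i\floor{a_i/d}-\sum_j\floor{b_j/d}\geq 0,
\]
does establish polynomiality (and this is exactly the argument the paper alludes to), but as you yourself note, it cannot yield positivity. Your two subsequent strategies---a sign-reversing involution on a yet-to-be-constructed signed set, and an iterated $q$-Chu--Vandermonde expansion into Gaussian multinomials with tracked Landau certificates---are plausible research directions, but neither is carried out, and you correctly identify the structural obstruction: removing a single $a_i$ or $b_j$ generally destroys Landau's criterion, so no straightforward induction or peeling argument is available.

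For context, the only general result the paper does prove (Theorem~\ref{th-WZ}) covers balanced coprime pairs of height one, and the method there is entirely different from your proposed strategies: it proceeds case-by-case through Bober's classification, verifying the $52$ sporadic cases by direct computation and handling the three two-parameter families via explicit recursions such as
\[
B(m,n;q)=q^{2m-2n}B(m-1,n-1;q)+(1+q^m+q^{m-n}+q^{2m+n-1})B(m-1,n;q).
\]
The paper explicitly remarks that for the families \eqref{Bonkers} and \eqref{Catalan} no combinatorial interpretation is known, which bears directly on the feasibility of your second step. In short: your polynomiality argument is correct and standard; your positivity plan remains a plan, and the conjecture remains open.
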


In the above, for $n$ a nonnegative integer, $[n]!$ is the $q$-analogue of $n!$, defined as
\[
[n]!=[n]_q!:=[1][2]\dotsm [n],
\]
where an empty product should be taken as $1$ and $[n]=[n]_q:=(1-q^n)/(1-q)$ is a $q$-number.
The difference $s-r$ is known as the \emph{height} of the $q$-factorial ratio \eqref{Dab} and the inequality \eqref{Landau} as \emph{Landau's criterion} \cite{Landau85}.
This criterion is a necessary and sufficient condition for the integrality of $D_n(\ba,\bb;1)$ for all positive integers $n$, where
\[
D_n(\ba,\bb;q):=D(\ba\,n,\bb\,n;q)=\frac{[a_1n]!\dotsm[a_rn]!}{[b_1n]!\dotsm[b_sn]!}.
\]
The irreducible factors over $\mathbb{Q}$ of $[n]$ are given by cyclotomic polynomials, and by a simple analysis of the latter it follows that $D(\ba,\bb;q)\in\mathbb{Z}[q]$ and, more generally, $D_n(\ba,\bb;q)\in\mathbb{Z}[q]$, see \cite{WZ11} for details.

Although Conjecture~\ref{qrious} was posed almost 15 years ago, and no counterexamples have yet been found, there are very few irreducible parametric families for which the conjecture has been proved.
Before discussing these cases and stating a generalisation to Conjecture~\ref{qrious}, we make a few preliminary comments.

A pair of tuples of positive integers $\ba=(a_1,\dotsc,a_r)$ and $\bb=(b_1,\dotsc,b_s)$ is said to be \emph{balanced} if $\abs{\ba}-\abs{\bb}=0$, where $\abs{\ba}:=a_1+\dotsb+a_r$ and $\abs{\bb}:=b_1+\dotsb+b_s$.
Now let us temporarily refer to \eqref{Landau} as Landau's criterion I, or \emph{Landau I} for short, and define \emph{Landau II} as the condition
\begin{equation}\label{Landau2}
\sum_{i=1}^r\floor{a_ix}-\sum_{j=1}^s\floor{b_jx}\geq 0
\quad\text{for all}\; x\in\mathbb{R}.
\end{equation}

\begin{lemma}
If $(\ba,\bb)$ satisfies Landau II then Landau I holds and $(\ba,\bb)$ is balanced.
\end{lemma}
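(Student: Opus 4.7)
The first half of the lemma is immediate: Landau~II quantifies over all real $x$, while Landau~I only asks for $x\geq 0$, so Landau~II implies Landau~I trivially and no work is required.

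For the balancedness, the plan is to extract the leading-order behaviour of
\[
f(x):=\sum_{i=1}^r\floor{a_ix}-\sum_{j=1}^s\floor{b_jx}
\]
as $\abs{x}\to\infty$. Writing $\floor{y}=y-\{y\}$ with $\{y\}\in[0,1)$, one gets
\[
f(x)=(\abs{\ba}-\abs{\bb})\,x+g(x),
\qquad \abs{g(x)}\leq r+s\;\text{for all}\;x\in\mathbb{R}.
\]
If $\abs{\ba}\neq\abs{\bb}$, the linear term dominates, and by choosing the sign of $x$ so that $(\abs{\ba}-\abs{\bb})x<0$ and $\abs{x}$ large enough (specifically $\abs{x}>(r+s)/\abs{\abs{\ba}-\abs{\bb}}$) one produces an $x\in\mathbb{R}$ with $f(x)<0$, contradicting Landau~II. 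Hence $\abs{\ba}=\abs{\bb}$, i.e.\ $(\ba,\bb)$ is balanced.

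There is no real obstacle here: the only thing to watch is that Landau~II is being used for \emph{both} signs of $x$, which is precisely where it is stronger than Landau~I and what forces the linear part of $f$ to vanish.
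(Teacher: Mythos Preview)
Your proof is correct and follows essentially the same approach as the paper: both decompose the floor sum into the linear term $(\abs{\ba}-\abs{\bb})x$ plus a bounded fractional-part contribution, and then note that the linear term is unbounded from below over $x\in\mathbb{R}$ unless $\abs{\ba}=\abs{\bb}$. Your version is slightly more explicit in quantifying the bound $\abs{g(x)}\leq r+s$ and the threshold for $\abs{x}$, but the underlying idea is identical.
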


\begin{proof}
It is clear that Landau II implies Landau I. 
To show balancing, note that
\begin{equation}\label{floor}
\sum_{i=1}^r \floor{a_ix}-\sum_{j=1}^s \floor{b_jx}=(\abs{\ba}-\abs{\bb})x-
\sum_{i=1}^r \{a_ix\}+\sum_{j=1}^s \{b_jx\},
\end{equation}
where $\{x\}$ is the fractional part of $x$.
Since the sum of differences between fractional parts is bounded and
$(\abs{\ba}-\abs{\bb})x$ is unbounded from below unless $\abs{\ba}=\abs{\bb}$, balancing follows.
\end{proof}

\begin{lemma}
If $(\ba,\bb)$ is balanced and satisfies Landau I then Landau II holds.
\end{lemma}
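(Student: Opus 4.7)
The plan is to exploit a hidden periodicity of the Landau defect function
\[
f(x):=\sum_{i=1}^r\floor{a_ix}-\sum_{j=1}^s\floor{b_jx}.
\]
First I would observe that, because each $a_i$ and $b_j$ is an \emph{integer},
\[
f(x+1)=\sum_{i=1}^r\floor{a_ix+a_i}-\sum_{j=1}^s\floor{b_jx+b_j}=f(x)+\abs{\ba}-\abs{\bb},
\]
and the balancing hypothesis $\abs{\ba}=\abs{\bb}$ collapses the right-hand side to $f(x)$. Hence $f$ is $1$-periodic on $\mathbb{R}$.

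Given this, the proof writes itself: for an arbitrary $x\in\mathbb{R}$ pick an integer $k\geq -x$, so that $x+k\geq 0$; then $f(x)=f(x+k)\geq 0$ by Landau~I, which is exactly Landau~II. There is essentially no obstacle: the balancing assumption is precisely what kills the linear drift in \eqref{floor} that would otherwise prevent $f$ from being periodic, and without it the conclusion actually fails (any imbalanced pair has $f(x)\to-\infty$ as $x\to-\infty$ if $\abs{\ba}>\abs{\bb}$, and cannot satisfy Landau~I on $x\geq 0$ at all if $\abs{\ba}<\abs{\bb}$, so both implications of the two lemmas are tight).
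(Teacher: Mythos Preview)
Your proof is correct and follows essentially the same approach as the paper: both arguments establish that the Landau defect function $f$ is $1$-periodic under the balancing hypothesis and then conclude Landau~II from Landau~I. The only cosmetic difference is that the paper reads off periodicity from the fractional-part decomposition \eqref{floor}, whereas you obtain it directly from $\floor{a_ix+a_i}=\floor{a_ix}+a_i$; these are equivalent.
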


\begin{proof}
If balancing holds then \eqref{floor} simplifies to
\[
\sum_{i=1}^r \floor{a_ix}-\sum_{j=1}^s \floor{b_jx}=
-\sum_{i=1}^r \{a_ix\}+\sum_{j=1}^s \{b_jx\}.
\]
Since this is $1$-periodic in $x$, Landau I implies Landau II.
\end{proof}

The above two lemmas imply that Landau I and Landau II are equivalent for balanced $(\ba,\bb)$. 
But if $(\ba,\bb)$ satisfies Landau I and is unbalanced, then it follows from \eqref{floor} that $\abs{\ba}-\abs{\bb}\geq 1$.
Defining
\[
\bb'=(b'_1,\dotsc,b'_{s'})=(b_1,\dotsc,b_s,\underbrace{1,1,\dotsc,1}_{\text{$s'-s$ times}})
\quad\text{with}\; s'=s+\abs{\ba}-\abs{\bb}
\] 
yields a new pair $(\ba,\bb')$ that is balanced.
Moreover, for $x\in[0,1)$,
\[
\sum_{i=1}^r \floor{a_ix}-\sum_{j=1}^{s'} \floor{b_j'x}=
\sum_{i=1}^r \floor{a_ix}-\sum_{j=1}^s \floor{b_jx}-(s'-s)\floor{x}=
\sum_{i=1}^r \floor{a_ix}-\sum_{j=1}^s \floor{b_jx}\geq 0.
\]
By the $1$-periodicity for balanced pairs, the above inequality thus holds for all $x\in\mathbb{R}$, so that Landau II holds for $(\ba,\bb')$.
The upshot of the above discussion is that it suffices to consider balanced pairs $(\ba,\bb)$ in the remainder, in which case Landau I and Landau II are of course indistinguishable.

In the remainder we say that a pair $(\ba,\bb)$ satisfying Landau's criterion is \emph{coprime} if
\[
\gcd(\ba,\bb):=\gcd(a_1,\dots,a_r,b_1,\dots,b_s)=1.
\]
Clearly, by replacing $x\mapsto x/d$, if $(\ba,\bb)$ satisfies \eqref{Landau} then so does $(\ba/d,\bb/d)$, where $d=\gcd(\ba,\bb)$.

\begin{theorem}[\!\!{\cite{WZ11}}]
\label{th-WZ}
Conjecture~\ref{qrious} holds for all balanced coprime pairs $(\ba,\bb)$ of height one.
\end{theorem}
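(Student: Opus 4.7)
The strategy is to exhibit $D(\ba,\bb;q)$ as a product of polynomials whose positivity is transparent. Two natural building blocks are at hand: the Gaussian binomial $\qbin{m}{k}_q$, positive by the classical interpretation as the generating function for partitions fitting in a $k\times(m-k)$ rectangle; and the $q$-integer ratio $[mn]_q/[n]_q=1+q^n+q^{2n}+\dotsb+q^{(m-1)n}$, visibly positive. If $D(\ba,\bb;q)$ can always be written as a product of such pieces, positivity follows at once.

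I would argue by strong induction on $N=|\ba|=|\bb|$. The base case $r=1$ is immediate: $s=2$ and balance force $a_1=b_1+b_2$, so $D(\ba,\bb;q)=\qbin{a_1}{b_1}_q$. For the inductive step, order the tuples so that $a_1\ge\dotsb\ge a_r$ and $b_1\ge\dotsb\ge b_{r+1}$; evaluating Landau's criterion at $x=1/b_1$ forces $a_1\ge b_1$, so the globally largest entry lies in $\ba$. The plan is then to extract from $D(\ba,\bb;q)$ a single building block involving $a_1$, leaving a balanced coprime height-one ratio of strictly smaller weight, to which the induction hypothesis applies. The cyclotomic factorisation
\[
[n]_q=\prod_{\substack{d\mid n\\ d>1}}\Phi_d(q),\qquad\text{giving}\qquad D(\ba,\bb;q)=\prod_{d\ge 2}\Phi_d(q)^{e_d},\quad e_d=\sum_{i=1}^r\floor{a_i/d}-\sum_{j=1}^s\floor{b_j/d}\ge 0,
\]
provides the arithmetic bookkeeping for identifying the block: Landau I gives $e_d\ge 0$, while height one, balance, and coprimality constrain the $e_d$ tightly enough to permit a grouping of cyclotomic factors into a single Gaussian binomial (or $q$-integer ratio) times a ratio of the same type.

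The main obstacle is the combinatorial step of producing this grouping explicitly for every height-one balanced coprime pair, and verifying that the residual ratio is again Landau-compliant\dash not merely an expression with nonnegative cyclotomic exponents. This is delicate because individual $\Phi_d(q)$ are not positive, so the blocks must mix cyclotomic factors judiciously; coprimality is needed to rule out trivial common factors that would obstruct the reduction, and height one together with balance is what keeps the reduced pair in the same class. I expect the argument to proceed via an algorithm keyed to the extremal rationals $x$ at which Landau's inequality is closest to equality, the data of which dictates both the choice of the exposed block and the rearrangement of the entries needed to realise the quotient as a genuine factorial ratio.
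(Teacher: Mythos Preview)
Your proposal is a strategy sketch, not a proof: you explicitly flag the ``main obstacle'' --- exhibiting the peeled-off block and verifying that the residual is again a balanced coprime height-one Landau pair --- and then leave it unresolved. That step is the entire content of the theorem. There is no known uniform procedure that strips a $q$-binomial or a ratio $[mn]_q/[n]_q$ from an arbitrary height-one $D(\ba,\bb;q)$ so that what remains is a factorial ratio of the same type; for the families $B(m,n;q)$ and $C(m,n;q)$ the paper explicitly remarks that ``no combinatorial interpretation is known'', and a factorisation into your building blocks would essentially furnish one. The cyclotomic bookkeeping you set up (nonnegativity of the exponents $e_d$) holds for \emph{every} Landau pair, not only height-one pairs; the additional input you claim from height one --- that it ``constrains the $e_d$ tightly enough'' to permit the grouping --- is asserted but never made precise, and the vague appeal to ``extremal rationals $x$'' does not supply it.

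The paper's proof is entirely different and makes no attempt at a uniform induction on $\abs{\ba}$. It rests on Bober's complete classification of irreducible balanced coprime height-one pairs: $52$ sporadic cases, each yielding a single polynomial whose positivity is verified directly by computer, together with the three two-parameter families \eqref{qbt}--\eqref{Bonkers2}. Positivity for the $q$-binomials \eqref{q-bin} is classical; for $B(m,n;q)$ and $C(m,n;q)$ it is established in \cite{WZ11} via explicit recurrences such as \eqref{Bonkers-rec}, not by factorisation into manifestly positive pieces. Were your inductive factorisation completable it would be classification-free and conceptually cleaner --- but the missing step is not a routine detail; it is the whole argument.
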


The proof is essentially case-by-case. 
Bober \cite{Bober09} gave a complete classification of all irreducible coprime pairs $(\ba,\bb)$ of height one.
His list consists of $52$ sporadic cases (including Chebyshev's $\ba=(1,30)$, $\bb=(6,10,15)$) for which it is easy to do a computer-assisted check of positivity, and the three two-parameter families:
\begin{subequations}
\begin{align}
(\ba,\bb)&=\big((m+n),(m,n)\big), \label{qbt} \\
(\ba,\bb)&=\big((2m,n),(m,m-n,2n)\big), \label{super} \\
(\ba,\bb)&=\big((2m,2n),(m,n,m+n)\big) \label{Bonkers2},
\end{align}
\end{subequations}
where in each case $m,n$ are relatively prime positive integers and $m>n$ in for the second family.
For these three families it was shown in \cite{WZ11} that the polynomials $D(\ba,\bb;q)$, given by
\begin{subequations}
\begin{align}
&\qquad\frac{[m+n]!}{[m]![n]!}, \label{q-bin} \\
B(m,n;q) &:=\frac{[2m]![n]!}{[m]![m-n]![2n]!}, \label{Bonkers} \\
C(m,n;q) &:=\frac{[2m]![2n]!}{[m]![m+n]![n]!}, \label{Catalan}
\end{align}
\end{subequations}
are positive for all pairs of nonnegative integers $m,n$ and $m\geq n$ in the case of \eqref{Bonkers}.
The first family is classical and corresponds to the well-known $q$-binomial coefficients $\qbin{m+n}{m}$.
Positivity is trivial and follows from the recursion
\[
\qbin{m+n}{m}=\qbin{m+n-1}{m-1}+q^m \qbin{m+n-1}{m}
\]
(and $\qbin{m}{0}=\qbin{m}{m}=1$), or from the well-known fact that $\qbin{m+n}{m}$ is the generating function for integer partitions contained in an $m\times n$ rectangle~\cite{Andrews76}.
For the other two families of polynomials no combinatorial interpretation is known, making their positivity somewhat more mysterious, although in both cases positivity is not hard to establish using recurrences.
The case $B(m,n;q)$ is most akin to the $q$-binomial coefficients, admitting the three-term recursion
\begin{equation}\label{Bonkers-rec}
B(m,n;q)=q^{2m-2n}B(m-1,n-1;q)+(1+q^m+q^{m-n}+q^{2m+n-1})B(m-1,n;q),
\end{equation}
which combined with $B(m,0;q)=\qbin{2m}m$ and $B(m,m;q)=1$ implies positivity.
The third family of polynomials corresponds to the $q$-analogue of the super Catalan numbers.
For these no manifestly positive three-term recursion is presently known, see \cite{WZ11}.

It is not hard to find further irreducible balanced pairs $(\ba,\bb)$ satisfying Landau's criterion.
There is the partial classification by Soundararajan of balanced coprime pairs of height two \cite{Soundararajan20,Soundararajan22}, and also root systems that are not simply laced are a good source of examples.
However, proving positivity of non-sporadic cases appears to be extremely difficult.
This difficulty extends to Dick Askey's favourite pair~\cite{Askey86}
\[
(\ba,\bb)=\big((2m,2n,3n,3m+3n),(m,n,n,m+n,m+2n,2m+3n)\big),
\]
which arises in the Macdonald--Morris constant term identity for the root system $\mathrm{G}_2$ \cite{Habsieger86,Macdonald82,Morris82,Zeilberger87}, and its $\mathrm{F}_4$ counterpart \cite{GG92,Macdonald82,Morris82}
\begin{align*}
(\ba,\bb)&=\big((2m,2n,3m,3n,4n,2m+4n,4m+2n,2m+6n,4m+4n,6m+6n), \\
&\quad\qquad (m,m,n,n,n,m+n,m+2n,2m+n,m+3n,2m+3n,\\
&\quad\qquad\qquad 3m+3n,3m+4n,3m+5n,5m+6n)\big).
\end{align*}
In these examples, the height is exactly the rank of the root system.
Of course, since the classification of irreducible balanced coprime pairs satisfying \eqref{Landau} is still completely open (and for arbitrary height is almost surely intractable), a case-by-case approach will never settle the conjecture.
What is really needed is a proof that the integrality of $D_n(\ba,\bb)$ for all positive integers $n$ implies $D(\ba,\bb;q)$ is a positive polynomial.  

\medskip

Following the old adage ``if you cannot prove it, generalise it'', we in the following propose a new and equally `simple' conjecture that implies the $q$-rious positivity.

Assume that $(\ba,\bb)$ satisfies Landau's criterion.
Then 
\[
D(\ba,\bb;q)=\sum_{i=0}^k c_i q^i,
\]
where $c_i\in\mathbb{Z}$, $c_0=1$, $c_i=c_{k-i}$ for all $0\leq i\leq k$ and
$k:=\sum_i \binom{a_i}{2}-\sum_j \binom{b_j}{2}$.
Here the symmetry of the coefficients is an immediate consequence of
\[
[n]_{1/q}!=q^{-\binom{n}{2}} [n]_q!.
\]
The problem is thus to show that $c_i$ is nonnegative for all $0\leq i\leq \floor{k/2}$.

A finite sequence $(c_0,c_1,\dotsc,c_k)$ is \emph{symmetric and unimodal} if $c_i=c_{k-i}$ for all $i$ and 
\[
c_0\leq c_1\leq\cdots\leq c_{\floor{k/2}}.
\]
Accordingly, the polynomial $P(q)=\sum_{i=0}^k c_i q^i$ is symmetric and unimodal if its coefficient sequence is symmetric and unimodal.
Clearly, if $P(q)$ and $Q(q)$ are two such polynomials, then so is their product.

\begin{conjecture}[$q$-rious unimodality]\label{qnimodal}
Let $(\ba,\bb)$ satisfy Landau's criterion \eqref{Landau}.
Then the polynomial $(1+\nobreak q) D(\ba,\bb;q)$ is unimodal.
\end{conjecture}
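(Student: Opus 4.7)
Since $D(\ba,\bb;q)=\sum_{i=0}^k c_iq^i$ is palindromic of degree $k$, the polynomial $(1+q)D(\ba,\bb;q)$ is palindromic of degree $k+1$, and a one-line calculation gives that its unimodality is equivalent to the inequalities $c_{i-1}\le c_{i+1}$ for all $i\le \floor{k/2}$, i.e., to the simultaneous unimodality of the even-indexed subsequence $(c_0,c_2,c_4,\dots)$ and the odd-indexed subsequence $(c_1,c_3,c_5,\dots)$ of the coefficients of $D(\ba,\bb;q)$. Since $c_0=1$, an elementary induction along each subsequence then yields $c_i\ge 0$ for all $i$, confirming that Conjecture~\ref{qnimodal} does imply Conjecture~\ref{qrious}. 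My plan starts by adopting this reformulation and attacking the ``even-indexed versus odd-indexed'' unimodality directly.

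The natural general strategy is \emph{hard Lefschetz}: exhibit a graded vector space $V=\bigoplus_{i=0}^k V_i$ with $\dim V_i=c_i$ together with a linear operator $L\colon V_i\to V_{i+2}$ whose iterates induce isomorphisms between symmetrically placed weight spaces; standard $\mathfrak{sl}_2$-representation theory then delivers unimodality of the two parity sub-sequences simultaneously. For the family~\eqref{qbt} this is realised by the cohomology of the Grassmannian with $L$ cup product with a hyperplane class, so the conjecture is known there classically. The task is to produce analogous modules for the other Landau pairs, perhaps through Nakajima quiver varieties, Springer-type constructions, or $q$-analogues of combinatorial cell complexes. An alternative route is to factor $(1+q)D(\ba,\bb;q)$ as a product of explicit palindromic unimodal nonnegative polynomials, exploiting that this class is closed under products; the cyclotomic factorization $D(\ba,\bb;q)=\prod_{n\ge 2}\Phi_n(q)^{e_n}$, with $e_n=\sum_i\floor{a_i/n}-\sum_j\floor{b_j/n}\ge 0$ by Landau, is the obvious starting point, although since individual $\Phi_n$'s can fail even to be nonnegative, the factors must be regrouped into unimodal blocks of some kind.

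Short of a uniform approach, I would settle the three height-one families of Theorem~\ref{th-WZ} separately. For~\eqref{q-bin} unimodality is immediate from Sylvester's theorem on $\qbin{m+n}{m}_q$. For $B(m,n;q)$ I would multiply the recursion~\eqref{Bonkers-rec} by $(1+q)$ and induct on $m+n$; the main difficulty is that the individual summands on the right are not palindromic (only their sum is), so one must carefully track how the two pieces combine. For $C(m,n;q)$, where no positive three-term recursion is known, one might instead exploit the identity $C(m,n;q)\,\qbin{m+n}{n}_q=\qbin{2m}{m}_q\,\qbin{2n}{n}_q$ to reduce unimodality of $C(m,n;q)$ to a quotient problem for classical unimodal polynomials. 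The hardest part, and the principal obstacle to any proof of the conjecture, is the absence of an intrinsic combinatorial or geometric realisation of $D(\ba,\bb;q)$ beyond the $q$-binomial case. The smoothing factor $(1+q)=[2]_q$ is tantalizingly suggestive of tensoring with the natural two-dimensional $\mathfrak{sl}_2$-module, but no candidate underlying module is known in general, and without such a framework the conjecture, like Conjecture~\ref{qrious} itself, appears genuinely resistant.
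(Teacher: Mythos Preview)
The statement you are asked to prove is a \emph{conjecture}: the paper does not prove it, and indeed explicitly presents it as open. So there is no proof in the paper for your attempt to be compared against. Your proposal is not a proof either, and you acknowledge as much in your final sentence (``the conjecture \ldots\ appears genuinely resistant''). What you have written is a research programme: a hard-Lefschetz/$\mathfrak{sl}_2$ scheme, a cyclotomic-regrouping idea, and case-by-case plans for the three height-one families. None of these is carried to a conclusion, and for each you correctly identify the obstruction (no known module realisation; individual $\Phi_n$ can have negative coefficients; the summands in the $B$-recursion are not individually palindromic; no positive recursion is known for $C$). As a survey of possible attacks this is reasonable, but it does not constitute a proof of Conjecture~\ref{qnimodal}.

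What your first paragraph \emph{does} establish is that Conjecture~\ref{qnimodal} implies Conjecture~\ref{qrious}; this is exactly the content of Lemma~7 in the paper, not of the conjecture itself. Your argument for that implication---reformulating unimodality of $(1+q)D(\ba,\bb;q)$ as the chain of inequalities $c_{i-1}\le c_{i+1}$ and then reading off nonnegativity from $c_{-1}=0$ and $c_0=1$---is correct and matches the paper's Lemma~7 and the subsequent Remark. One small caution: your ``i.e.'' identifying these inequalities with parity unimodality of the two subsequences is a slight overstatement in the odd-degree case, where (as the paper's Remark notes) unimodality of $(1+q)P$ also forces the cross-over inequality linking the top of the even chain to the top of the odd chain; your inequality list $c_{i-1}\le c_{i+1}$ does capture this, but the phrase ``simultaneous unimodality of the even- and odd-indexed subsequences'' on its own does not.
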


\begin{lemma}
Conjecture~\ref{qnimodal} implies Conjecture~\ref{qrious}.
\end{lemma}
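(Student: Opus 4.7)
The plan is to recover the coefficients of $D(\ba,\bb;q)$ from those of $(1+q)D(\ba,\bb;q)$ by an alternating sum, and then to exploit unimodality to pair this sum into nonnegative differences. Write
\[
D(\ba,\bb;q)=\sum_{i=0}^k c_iq^i
\qquad\text{and}\qquad
(1+q)D(\ba,\bb;q)=\sum_{i=0}^{k+1}d_iq^i,
\]
so that $d_i=c_{i-1}+c_i$ under the convention $c_{-1}=c_{k+1}=0$. Inverting this recursion gives
\[
c_n=\sum_{i=0}^n(-1)^{n-i}d_i=d_n-d_{n-1}+d_{n-2}-\dotsb\pm d_0,
\]
and by the symmetry $c_i=c_{k-i}$ already recorded in the excerpt it suffices to establish $c_n\geq 0$ for $0\leq n\leq\floor{k/2}$.

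Next I would observe that $(1+q)D(\ba,\bb;q)$ inherits symmetry from $D(\ba,\bb;q)$, so its assumed unimodality amounts to
\[
d_0\leq d_1\leq\dotsb\leq d_{\floor{(k+1)/2}}.
\]
Since $\floor{k/2}\leq\floor{(k+1)/2}$, for every $n$ in the target range $0\leq n\leq\floor{k/2}$ all indices $0,1,\dotsc,n$ sit in this nondecreasing portion. Pairing adjacent terms in the alternating sum from the top then yields, for odd $n$,
\[
c_n=(d_n-d_{n-1})+(d_{n-2}-d_{n-3})+\dotsb+(d_1-d_0),
\]
and for even $n$,
\[
c_n=(d_n-d_{n-1})+(d_{n-2}-d_{n-3})+\dotsb+(d_2-d_1)+d_0.
\]
Each bracketed difference is nonnegative by the previous display, and $d_0=c_0=1$, so $c_n\geq 0$ in either case.

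The deduction is essentially coefficient bookkeeping and I do not anticipate a genuine obstacle; the only point requiring any care is the comparison $\floor{k/2}\leq\floor{(k+1)/2}$, which guarantees that every index appearing in the alternating sum for $c_n$ falls within the increasing half of the unimodal sequence $(d_0,\dotsc,d_{k+1})$. This also illustrates why the factor $1+q$ is natural here: it is precisely what is needed to convert a symmetric unimodal sequence into the differences whose alternating partial sums reproduce the coefficients of $D(\ba,\bb;q)$.
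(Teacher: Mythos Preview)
Your proof is correct and is essentially the same coefficient argument as the paper's: both exploit $d_i=c_{i-1}+c_i$ together with the unimodality inequalities $d_0\leq d_1\leq\cdots\leq d_{\floor{(k+1)/2}}$. The paper organises this by noting $d_{i+1}-d_i=c_{i+1}-c_{i-1}\geq 0$, which directly yields the parity chains $1=c_0\leq c_2\leq\cdots$ and $0=c_{-1}\leq c_1\leq c_3\leq\cdots$ (and hence positivity), whereas you invert to $c_n=\sum_{i=0}^n(-1)^{n-i}d_i$ and pair adjacent terms---an equivalent telescoping.
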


\begin{proof}
Let $P(q):=D(\ba,\bb;q)$ and $Q(q):=(1+q)P(q)$, and assume that $P(q)$ has even order:
$P(q)=\sum_{i=0}^{2k} c_iq^i$, where $c_i=c_{2k-i}$ and $c_0=1$.
Then
\[
Q(q)=
\sum_{i=0}^k (c_{i-1}+c_i)q^i\big(1+q^{2k-2i+1}\big),
\]
where $c_{-1}:=0$.
Since $Q(q)$ is unimodal
\[
1=c_0\leq c_2\leq\cdots\leq c_{2\floor{k/2}}
\quad\text{and}\quad 
0\leq c_1\leq c_3\leq\cdots\leq c_{2\ceil{k/2}-1}.
\]
This establishes the positivity of $P(q)$.
The odd-order case proceeds in almost identical fashion and is left to the reader.
\end{proof}

\begin{remark}
A polynomial $P(q)=c_0+c_1 q+\dots+c_k q^k$ is said to be \emph{parity unimodal} if the sequences $(c_0,c_2,\dots,c_{2\floor{k/2}})$ and $(c_1,c_3,\dots,c_{2\ceil{k/2}-1})$ are both unimodal, see \cite{Stucky21} (or \cite[Conjecture 7]{IO15}). 
Assuming Conjecture~\ref{qnimodal} it follows that $D(\ba,\bb;q)$ is parity unimodal. 
Parity unimodality of a polynomial $P(q)$, however, is necessary but not sufficient for the unimodality of $Q(q)=(1+q)P(q)$.
For example, if $P(q)=\sum_{i=0}^{2k+1} c_i q^i$ such that $c_i=c_{2k-i+1}$, then $Q(q)$ is unimodal if and only if
\[
c_0\leq c_2\leq\cdots\leq c_{2\floor{k/2}}\leq c_{2\ceil{k/2}-1}\geq\cdots\geq c_3\geq c_1\geq 0;
\]
this clearly is stronger than the parity unimodality of $P(q)$.
If $P(q)=\sum_{i=0}^{2k} c_i q^i$ such that $c_i=c_{2k-i}$, then $Q(q)$ is unimodal if and only if
\[
c_0\leq c_2\leq\dots\leq c_{2\floor{k/2}}
\quad\text{and}\quad
c_{2\ceil{k/2}-1}\geq\cdots\geq c_3\geq c_1\geq 0.
\]
For $P(q)=D(\ba,\bb;q)$ the condition $c_1\geq 0$ trivially holds.
Therefore, if this polynomial has even degree, then parity unimodality implies 
the unimodality of $Q(q)$.
\end{remark}

As a companion to Theorem~\ref{th-WZ} we have the following, which requires no more than routine computer assisted verification.

\begin{lemma}
Conjecture~\ref{qnimodal} holds for all $52$ sporadic irreducible coprime pairs $(\ba,\bb)$ in Bober's list as well as for the two-parameter family \eqref{qbt}.
\end{lemma}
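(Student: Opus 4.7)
The plan is to split the statement into the two claims: the one-parameter $q$-binomial family \eqref{qbt} on the one hand, and the $52$ sporadic pairs on the other. These require entirely different (but individually very short) treatments.

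For the family \eqref{qbt}, where $D(\ba,\bb;q)$ is the $q$-binomial coefficient $\qbin{m+n}{m}$, I would invoke the classical theorem of Sylvester (proved using the representation theory of $\mathrm{SL}_2(\mathbb{C})$, with later combinatorial proofs by O'Hara, Zeilberger, and others) that $\qbin{m+n}{m}$ is a symmetric and unimodal polynomial for all nonnegative integers $m,n$. Since $1+q$ is trivially symmetric and unimodal, and since the excerpt already records that the product of two symmetric unimodal polynomials is symmetric and unimodal, it follows immediately that $(1+q)\qbin{m+n}{m}$ is symmetric and unimodal. This disposes of \eqref{qbt}.

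For the $52$ sporadic cases the plan is direct computer-assisted verification. For each fixed pair $(\ba,\bb)$ on Bober's list, the polynomial $D(\ba,\bb;q)$ is a finite, explicit element of $\mathbb{Z}[q]$, computable by expanding the quotient of $q$-factorials (or, equivalently, as a finite product of cyclotomic polynomials determined by Landau's criterion). One then forms $(1+q)D(\ba,\bb;q)$, extracts the coefficient sequence $(d_0,d_1,\dots,d_{k+1})$, and checks the chain of inequalities $d_0\leq d_1\leq\dotsb\leq d_{\floor{(k+1)/2}}$. The degrees involved are modest\dash for instance, Chebyshev's pair $\ba=(1,30)$, $\bb=(6,10,15)$ yields degree $\binom{30}{2}-\binom{6}{2}-\binom{10}{2}-\binom{15}{2}=270$\dash so all $52$ checks are immediate in any computer algebra system.

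I do not anticipate a real obstacle: the only nontrivial input is Sylvester's theorem on the unimodality of $q$-binomials, and everything else is either a formal consequence of closure under products of symmetric unimodal polynomials or a bounded finite computation. This is of course precisely why the lemma is advertised as requiring ``no more than routine computer assisted verification''; any difficulty in the overall $q$-rious unimodality conjecture is pushed into the two-parameter families \eqref{super} and \eqref{Bonkers2} and into all cases of height $\geq 2$, which lie outside the scope of this lemma.
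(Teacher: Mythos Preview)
Your proposal is correct and matches the paper's approach: the paper offers no formal proof beyond the remark that the lemma ``requires no more than routine computer assisted verification'', and immediately afterwards notes that unimodality of the $q$-binomial coefficients (without the factor $1+q$) is Sylvester's theorem. Your write-up simply makes explicit the two ingredients the paper leaves implicit---Sylvester for \eqref{qbt} combined with closure of symmetric unimodal polynomials under products, and a finite computer check for the $52$ sporadics.
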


We remark that for most sporadic pairs unimodality holds without the factor $1+q$. 
The exceptions are the 16 pairs labelled 1, 2, 3, 5, 6, 10, 11, 12, 23, 24, 32, 39, 40, 44, 50, 52 in Bober's classification.
Of course, unimodality (without the factor $1+q$) also holds for the $q$-binomial coefficients \eqref{q-bin}, a result first proved by Sylvester \cite{Sylvester78}.
None of the many subsequent proofs of unimodality \dash including the algebraisation of O'Hara's combinatorial proof \cite{Ohara90} by Zeilberger, see e.g., \cite{Bressoud89,Macdonald89,Zeilberger89} \dash can be considered elementary.
This, combined with the lack of a combinatorial or representation theoretic interpretation of \eqref{Bonkers} and \eqref{Catalan}, perhaps explains why so far we have been unable to show that the conjecture holds for the pairs \eqref{super} and \eqref{Bonkers2}.
In the case of the $q$-super Catalan polynomials \eqref{Catalan} it would seem that Conjecture~\ref{qnimodal} holds without the factor $1+q$, whereas for \eqref{Bonkers} this factor appears to be necessary only for
$(m,n)\in\{(4,1),(5,1),(7,1),(8,1),(8,3)\}$ and $(m,n)\in\{(2k+1,2k-1):~k\geq 1\}$, as well as for their symmetric counterparts.
For example,
\begin{align*}
B(4,1;q)=\frac{[1]![8]!}{[2]![3]![4]!}
&=1+q+3q^2+4q^3+7q^4+8q^5+12q^6+12q^7+15q^8+14q^9 \\
&\quad
+15q^{10}+12q^{11}+12q^{12}+8q^{13}+7q^{14}+4q^{15}+3q^{16}+q^{17}+q^{18}.
\end{align*}
Extensive computer checks also strongly suggest that $D_n(\ba,\bb;q)$ is unimodal for all integers $n\geq 2$.

To conclude this note, we discuss the evidence we have in support of Conjecture~\ref{qnimodal}.
For each of Bober's sporadic pairs, there is just a single polynomial for which unimodality has to be checked, making verification a straightforward, finite task.
For the corresponding 52 non-coprime families $D_n(\ba,\bb;q)$ for $n\geq 2$, the sporadic height-two examples found by Soundararajan, as well as for all of the available two-parametric and three-parametric families of solutions to \eqref{Landau}, we have typically checked all those polynomials within a family of order up to $10,000$.
This list of parametric families extends well beyond Bober's height-one classification, the height-two family of \cite{Soundararajan20}
\begin{equation}\label{Sound}
\frac{[6m]![n]!}{[2m]![3m]![m-5n]![6n]!} \quad\text{for $m\geq 5n$},
\end{equation}
(which includes Chebyshev's factorial ratio $D_n\big((1,30),(6,10,15);1)$),
and the examples arising from root systems, such as the $\mathrm{G}_2$ and $\mathrm{F}_4$ cases.
Additional families may be sourced from the many summation formulas for basic hypergeometric series, see e.g.,~\cite{GR04}.
For example, according to the $q$-analogue of Dixon's summation \cite{Andrews75}
\[
\frac{[\ell+m+n]!\,[2\ell]!\,[2m]!\,[2n]!}{[\ell]!\,[m]!\,[n]!\,[\ell+m]!\,[m+n]!\,[n+\ell]!}
=\sum_k(-1)^kq^{k(3k-1)/2}\qbin{2\ell}{\ell+k}\qbin{2m}{m+k}\qbin{2n}{n+k}.
\]
Since the right-hand side asserts the polynomiality of the $q$-factorial ratio on the left (although not its positivity), this ratio is a good candidate for unimodality, an observation that is supported by computer checks.
Furthermore, by extrapolating some of the findings from \cite{Bober09,Soundararajan20,Soundararajan22}, one is naturally led to consider the following two infinite collections of two-parameter families:
\[
B_{\lambda}(m,n;q):=\frac{[\abs{\lambda}m+m]![n]!}
{[\lambda m]![m-\abs{\lambda}n]![\abs{\lambda}n+n]!} \quad\text{for}\; m\geq\abs{\lambda}n
\]
and
\[
C_\lambda(m,n;q):=\frac{[\abs{\lambda}m+m]![\abs{\lambda}n+n]!}{[\lambda m]!\,[m+\abs{\lambda}n]![n]!}.
\]
Here $\lambda$ is an integer partition $\lambda=(\lambda_1\geq\lambda_2\geq\dotsb\geq\lambda_r\geq 1)$ of size $\abs{\lambda}=\lambda_1+\lambda_2+\dotsb+\lambda_r$ and $[\lambda m]!:=\prod_{i=1}^r [\lambda_i m]!$.
In order for \eqref{Landau} to be satisfied, additional restrictions on $\lambda$ need to be imposed and, up to size $11$, there are $14$ admissible partitions:
\begin{gather*}
(1), \; (1,1), \; (2,1), \; (2,1,1), \; (3,2), \; (3,2,1), \; (4,2,1), \; (4,3,1), \\ (5,3,1), \; (5,2,2), \; (4,3,2), \; (5,3,2), \; (6,4,1), \; (4,4,3).
\end{gather*}
$B_{(1)}(m,n;q)$ and $C_{(1)}(m,n;q)$ are precisely \eqref{Bonkers} and \eqref{Catalan}, while $B_{(3,2)}(m,n;q)$ is \eqref{Sound}.
For the other two partitions of length two we get
\begin{align*}
B_{(1,1)}(m,n;q)&=\frac{[3m]![n]!}{[m]![m]![m-2n]![3n]!}, \\
B_{(2,1)}(m,n;q)&=\frac{[4m]![n]!}{[m]![2m]![m-3n]![4n]!},
\end{align*}
generalising
\[
B_{(1,1)}(2n,n;q)=D_n\big((1,6),(2,2,3);q\big)=B(3n,n;q)
\]
and
\[
B_{(2,1)}(3n,n)=D_n\big((1,12),(3,4,6);q\big)=B(6n,n;q).
\]
Conjecture~\ref{qnimodal} is supported by all these additional examples.

Given the difficulty of establishing unimodality (or even positivity), we conclude by posing an easier problem that hopefully is accessible through asymptotic methods.

\begin{problem}
Let $(\ba,\bb)$ be any balanced coprime pair satisfying Landau's criterion \eqref{Landau}.
\begin{enumerate}
\item[1.] Show that there exists a positive integer $N$ such that $D_n(\ba,\bb;q)$ is positive/unimodal for all $n\geq N$.
\item[2.] Find an explicit upper bound for $N$ in terms of $(\ba,\bb)$. 
\end{enumerate}
\end{problem}

\subsection*{Acknowledgement}
We thank Christian Krattenthaler and Vic Reiner for alerting us to the papers \cite{IO15,Stucky21} in which polynomials displaying parity unimodality are considered.
We further thank Matthew Bolan for pointing out the $q=1$ instance of the three-term recursion for $B(m,n;q)$.
From this it was easy to find \eqref{Bonkers-rec}, which is quite different from the recursion used in \cite{WZ11} to establish positivity.

\end{document}